\documentclass[12pt]{amsart}

\usepackage{amscd,mathrsfs,epsfig,amsthm,amssymb,amsmath}
\usepackage[all]{xy} 
\usepackage[T1]{CJKutf8}
\usepackage[sans]{dsfont}  
\usepackage{hyperref}
\usepackage{tabularx}

\newtheorem{theorem}{Theorem}[subsection]
\newtheorem{corollary}[theorem]{Corollary}

\newtheorem{definition}[theorem]{Definition}
\newtheorem{proposition}[theorem]{Proposition}
\newtheorem{remark}[theorem]{Remark}

\newtheorem{example}[theorem]{Example}

\newtheorem*{thma}{Theorem A}
\newtheorem*{thmb}{Theorem B}

\textwidth 15cm \textheight 23cm \topmargin=-1cm
\oddsidemargin=0.4cm \evensidemargin=0.4cm

\allowdisplaybreaks[2]

\def\calA{{\mathcal A}}
\def\calB{{\mathcal B}}
\def\Bim{{\calB}im_k}
\def\calC{{\mathcal C}}
\def\calD{{\mathcal D}}

\def\calO{{\mathcal O}}
\def\calP{{\mathcal P}}

\def\calV{{\mathcal V}}
\def\calW{{\mathcal W}}

\def\frakB{{\mathfrak B}}
\def\frakC{{\mathfrak C}}
\def\frakD{{\mathfrak D}}
\def\frakM{{\mathfrak M}}
\def\frakN{{\mathfrak N}}
\def\frakF{{\mathfrak F}}

\def\frakR{{\mathfrak R}}
\def\frakS{{\mathfrak S}}

\def\frakV{{\mathfrak V}}

\def\frakW{{\mathfrak W}}

\def\Vec{{\mathop{2\frakV ect}\nolimits}}
\def\Bif{{\mathop{\frakB if}\nolimits}}
\def\Bim{{\mathop{\frakB im}\nolimits}}
\def\Cat{{\mathop{\frakC at}\nolimits}}

\def\Hom{\mathop{\rm Hom}\nolimits} 
 
\def\Id{\mathop{\rm Id}\nolimits}
\def\lim{\mathop{\varinjlim}\nolimits}
\def\Ob{\mathop{\rm Ob}\nolimits} 
\def\Mor{\mathop{\rm Mor}\nolimits}
\def\Rep{\mathop{\rm Rep}\nolimits}

\def\dom{\mathop{\rm dom}}

\DeclareMathOperator{\rMod}{Mod-}

\begin{document}

\title{Modulated categories and their representations via higher categories}

\author{Fei Xu}
\author{Maoyin Zhang}

\email{fxu@stu.edu.cn}
\email{19myzhang@stu.edu.cn}
\address{Department of Mathematics\\Shantou University\\Shantou, Guangdong 515063, China}

\subjclass[2020]{16B50, 16G10, 18N10}

\keywords{bimodule category, pseudofunctor, lax transformation, modification, 2-limit, modulated category algebra}

\thanks{The authors \begin{CJK*}{UTF8}{}
\CJKtilde \CJKfamily{gbsn}(徐斐、张懋胤)
\end{CJK*} are partially supported by the NSFC grants No.12171297}

\date{}
\maketitle

\begin{abstract} We consider the 3-category $2\Cat$ whose objects are 2-categories, 1-morphisms are lax functors, 2-morphisms are lax transformations and 3-morphisms are modifications. The aim is to show that it carries interesting representation-theoretic information.

Let $\calC$ be a small 1-category and $\Bim_k$ be the 2-category of bimodules over $k$-algebras, where $k$ is a commutative ring with identity. We call a covariant (resp. contravariant) pseudofunctor from $\calC$ into $\calB im_k$ a modulation (resp. comodulation) on $\calC$, define and study its representations. This framework provides a unified approach to investigate 2-representations of finite groups, modulated quivers and their representations, as well as presheaves of $k$-algebras and their modules. Moreover, several key constructions are natural ingredients in $2\Cat$, and thus it exhibits an interesting application of higher category theory to representation theory.
\end{abstract}

\tableofcontents

\section{Introduction}
Let $\calC$ be a small category. It is said to be \textit{finite} if $\Mor\calC$ is finite, and it is \textit{object-finite} if $\Ob\calC$ is finite. This paper is motivated by the usage of various small categories built on a fixed group \cite{AKO, Ba, DLLX, WX, Xu, XZ}. If $G$ is a finite group, then it can be realized as a finite category with one object. Ganter and Kapranov \cite{GK} introduced the concept of a 2-representation of $G$, which is a pseudofunctor from $G$ into the category of 2-vector spaces $\Vec_k$. One naturally wants to extend this construction to an arbitrary finite category $\calC$. Meanwhile, since $\Vec_k$ is a 2-subcategory of the bimodule category $\Bim_k$, it is appealing to consider pseudofunctors $\calC \to \Bim_k$. In fact, Balmer \cite{Ba} defined some functors from the (finite) orbit category $\calO(G)$ to $\Cat$, which factor through $\calO(G) \to \Bim_k$ (or its variantions), based on the realization of $\Bim_k$ as a 2-subcategory of $\Cat$, the category of small categories (in a suitable universe) \cite[Example 2.2]{GK}. Two other similar instances can be found in \cite[19.5.4]{KS} on Morita equivalences of module stacks, and in \cite{As} on gluing derived equivalences. 

The present paper focuses on both the covariant and contravariant pseudofunctors $\frakM : \calC \to \Bim_k$, trying to demonstrate their roles as a unifying framework to study several existing topics in representation theory and (pre)sheaf theory, including $k$-modulated quivers (where the names come from) and the presheaves of $k$-algebras. After introducing modulated and comodulated categories, we define their representations. From a modulated category (or a comodulated category), we straightforwardly construct an algebra, proving that their modules are identified with the aforementioned representations, under mild assumptions. This generalizes various key results in \cite{DR, Ga, Li, Mi, Si, WX}, with a striking higher categorical formulation.

\begin{thma}
Let $\calC$ be an object-finite category and $\frakM:\calC \to \Bim_k$ be a covariant pseudofunctor. Then its 2-limit, as an object in $\Bim_k$, is a unital associative algebra, called the modulated category algebra and represented by $\frakM[\calC]=\oplus_{\alpha\in\Mor\calC}\frakM(\alpha)$, carrying an equivalence (on the left is simply the category of $A$-$\frakM[\calC]$-bimodules)
$$
\Hom_{\Bim_k}(A,\frakM[\calC])\simeq\Hom_{{\Bim_k}^{\calC}}(\underline{A},\frakM),
$$
where $A$ is an object of $\Bim_k$ (a unital $k$-algebra) and $\underline{A}$ is the corresponding constant pseudofunctor. Particularly, the right module category ${\rm Mod}$-$\frakM[\calC]$ is equivalent to $\Rep_k(\calC,\frakM)$, the category of representations of the modulated category $(\calC,\frakM)$.

Similar constructions and equivalence hold for a comodulation on $\calC$.
\end{thma}

As it shows, many of the above constructions are encoded in higher category theory. The category of all 2-categories is a 3-category, with objects the 2-categories (including 1-categories), 1-morphisms the lax functors (including pseudofunctors), 2-morphisms the lax (natural) transformations, and 3-morphisms the modifications. We offer the following dictionary for the relevant structures defined on an index 1-category $\calC$ in different contexts (here $\underline{k}$ is a constant pseudofunctor) :

\begin{center}
\begin{tabular}{c|c|c|c}
\hline
Structure & Higher Category Th. & Representation Th. & Algebra\\
\hline
\hline
$\frakM : \calC \to \Bim_k$ & a pseudofunctor & a $k$-modulation & an algebra $\frakM[\calC]$\\
\hline
\hline
$\calV : \underline{k} \to \frakM$  & a lax transformation & a $k$-representation & a module\\
\hline
$T : \calV \to \calW$  & a modification  & a morphism & a morphism\\
\hline
\hline
Category & $\Hom(\underline{k},\frakM)$ & $\Rep_k(\calC,\frakM)$ & $\rMod\frakM[\calC]$\\
\hline
\end{tabular}
\end{center}

We emphasize that the 2-representations of Ganter-Kapranov \cite{GK} are covariant pseudofunctors (1-morphisms), while the representations of modulated categories are lax transformations (2-morphisms), traced back to Gabriel \cite{Ga}. These are structures on different levels, but we shall see that the representations of the trivial 2-representation $\underline{k}$ are classical representations (i.e. 1-representations). There is an analogous chart for comodulated categories, which we choose not to write down here. 

If $\frakR$ is a presheaf of $k$-algebras on $\calC$, then it provides both a modulation $\frakM_{\frakR}$ and a comodulation $\frakW_{\frakR}$ on $\calC$. With the comodulation $\frakW_{\frakR}$, one immediately sees that the comodulated category algebra $\frakW_{\frakR}[\calC]$ is the same as the skew category algebra $\frakR[\calC]$ defined in \cite{WX}, and the representations of $(\calC,\frakW_{\frakR})$ are modules of $\frakR$ (in the context of sheaf theory). Therefore, the above result generalizes one of the main theorems in \cite{WX}. Now, there is a geometric way to introduce various finiteness conditions on $\frakR$-modules, via slice categories. Meanwhile, one may characterize the objects of Mod-$\frakW_{\frakR}[\calC]$ by module-theoretic methods. It is natural to ask whether these two seemingly different sets of finiteness conditions agree? We demonstrate that the algebraic finiteness conditions are weaker than the geometric ones.

\begin{thmb} Let $\calC$ be an object-finite category. Given the equivalence $\rMod\frakR\simeq\rMod\frakW_{\frakR}[\calC]$, the finite-type $\frakR$-modules correspond to finitely generated $\frakR[\calC]$-modules, but not vice versa. 
\end{thmb}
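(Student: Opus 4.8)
The plan is to establish the two assertions separately: the positive correspondence, and the failure of its converse via an explicit counterexample. Throughout I would use the equivalence $\Phi\colon\Mod\frakR\xrightarrow{\sim}\rMod\frakW_{\frakR}[\calC]=\rMod\frakR[\calC]$ furnished by the comodulation case of Theorem A, under which an $\frakR$-module $M$ is sent to $\widetilde M=\bigoplus_{c\in\Ob\calC}M(c)$ with its natural right action. Since $\calC$ is object-finite, $\frakR[\calC]$ is unital with $1=\sum_{c\in\Ob\calC}e_c$, the $e_c$ being orthogonal idempotents attached to the identity endomorphisms; one recovers $M(c)=\widetilde M e_c$ together with the embedding $\frakR(c)\hookrightarrow e_c\frakR[\calC]e_c$ carrying the module structure.

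For the forward direction I would suppose $M$ is of finite type. By definition this provides, for each object $c$, a finite subset $S_c\subseteq M(c)$ whose restrictions generate $M$ over the slice category $\calC/c$; in particular $S_c$ generates $M(c)$ over $\frakR(c)$. Object-finiteness makes $S=\bigcup_{c\in\Ob\calC}S_c$ finite, and I claim it generates $\widetilde M$ over $\frakR[\calC]$. Indeed, any $m\in\widetilde M$ decomposes as $m=\sum_c m e_c$ with $m e_c\in M(c)$, and the latter is a finite $\frakR(c)$-combination of $S_c$; as the $\frakR(c)$-action is realized through $\frakR(c)\hookrightarrow\frakR[\calC]$, each $m e_c$ lies in the $\frakR[\calC]$-submodule generated by $S_c$. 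Hence $\widetilde M$ is finitely generated, which is the asserted correspondence.

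For the converse, the operative point is that object-finiteness tolerates infinitely many morphisms, which $\frakR[\calC]$ absorbs into its multiplication. I would take $\calC$ to be the one-object category whose endomorphism monoid is the free monoid $\mathbb N$ on a single generator $t$; it is object-finite yet has infinitely many morphisms. Letting $\frakR$ be the constant presheaf with value $k$ and identity structure maps, one gets $\frakR[\calC]\cong k[\mathbb N]=k[t]$. The regular module $k[t]$ is cyclic, hence finitely generated, over $\frakR[\calC]$; but applying $\Phi^{-1}$ it corresponds to the $\frakR$-module $M$ with $M(\ast)=k[t]$ as a $k$-space and $t$ acting by the structure endomorphism. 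Since $M(\ast)$ is infinite-dimensional over $\frakR(\ast)=k$, it is not finitely generated over $\frakR(\ast)$, so $M$ fails the slice condition already at $\id_\ast$ and cannot be of finite type. This exhibits a finitely generated $\frakR[\calC]$-module whose associated $\frakR$-module is not of finite type.

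The forward direction is essentially bookkeeping once object-finiteness globalizes the local generators, so I expect the substantive part to be the converse. The main care will lie in matching the geometric definition via $\calC/c$ with the module-theoretic description supplied by $\Phi^{-1}$: one must confirm that in the example $M(\ast)$ really is the underlying space $k[t]$ with $\frakR(\ast)=k$ acting by scalars, so that its failure to be finitely generated over $k$ precisely witnesses the failure of finite type, and check that the slice condition cannot hold vacuously there. This is where the notion of finite type must be pinned down so that it genuinely records object-wise finite generation rather than the coarser global finiteness over $\frakR[\calC]$.
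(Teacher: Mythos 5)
Your proposal is correct as a proof of the theorem as stated, but both halves diverge from the paper's argument. For the forward direction you evaluate the epimorphism $(\frakR|_c)^{n_c}\to M|_c$ at the terminal object $(c,1_c)$ of the slice, obtain finite generation of each $M(c)$ over $\frakR(c)$, and globalize using object-finiteness and the idempotents $e_c$; this is clean and works for an arbitrary presheaf of algebras. The paper instead inserts, mid-proof, the extra hypothesis that $\frakR$ takes values in finite-dimensional algebras and concludes that $\bigoplus_x\calV(x)$ is finite-dimensional, hence finitely generated — so your argument actually covers the stated generality better than theirs. For the converse, your counterexample (one object, endomorphism monoid $\mathbb N$, constant presheaf $\underline k$, so $\frakR[\calC]\cong k[t]$ with the regular module giving $M(\ast)=k[t]$ not finitely generated over $k$) is valid, since the theorem only assumes object-finiteness. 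But the paper's counterexample is a genuinely \emph{finite} category ($x\rightrightarrows y$ with two parallel arrows) and $\frakR=\underline k$: there the summand $\calP_x=\frakR[\calC]1_x$ has $\calP_x(x)=k$, $\calP_x(y)=0$, so it is finitely generated object-wise at every object, yet any morphism $\underline k|_y\to\calP_x|_y$ is forced to vanish by the (zero) structure maps over the slice $\calC/y$, so no epimorphism $(\underline k|_y)^n\to\calP_x|_y$ exists. This shows the discrepancy persists even with finitely many morphisms and finite-dimensional algebras, and that finite type is strictly stronger than object-wise finite generation because of the slice-category structure maps — so your diagnosis that ``the operative point is that object-finiteness tolerates infinitely many morphisms'' identifies a sufficient but not the essential source of the failure. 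Your proof stands, but the paper's example isolates the sharper phenomenon.
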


In Section 5, we give a small explicit example to tell the differences.

The paper is organized as follows. In Section 2, we review the basics of higher category theory. In Section 3, we introduce modulated and comodulated categories, and their representations, in both higher categorical terms and concrete terms. Then in Section 4 we define the modulated and comodulated algebras, and establish Theorem A by showing that they are certain 2-limits. Finally, we compare algebraic and geometric finiteness conditions on modules over a presheaf of algebras in Section 5.

\section{Basic higher category theory} \label{2cat}

We shall consider the 2-category of bimodules and the pseudofunctors into it. Although they have roots in algebraic geometry, these categorical constructions seem to provide a natural way to organize representation-theoretic information.

All 2-categories form a 3-category, with 2-categories as objects, lax functors as 1-morphisms, lax (natural) transformations as 2-morphisms, and modifications as 3-morphisms. Surprisingly, all of these components carry representation-theoretic information in our case.

\subsection{2-categories} As our central concept, we recall the definition of a 2-category. Classically, a 2-category would be a strict 2-category, but here we follow the trend to mean a \textit{bicategory} (some weak 2-category) introduced by B\'enabou \cite{Be}, see for instance \cite{GK, Lu}.

\begin{definition} A 2-category $\frakC$ consists of the following data
\begin{enumerate}
    \item a class of objects $\Ob\frakC$;

    \item for any pair of objects $x$ and $y$, a category $\Hom_{\frakC}(x,y)$, whose objects are called 1-morphisms;
    
    \item for any $\alpha,\beta\in\Hom_{\frakC}(x,y)$, the morphisms from $\alpha$ to $\beta$ are called 2-morphisms, written as $\alpha \Rightarrow \beta$;

    \item for every triple $x, y, z$, a composition functor
    $$
    -\circ - : \Hom_{\frakC}(y,z)\times\Hom_{\frakC}(x,y) \to \Hom_{\frakC}(x,z);
    $$

    \item for every $x\in\Ob\frakC$, an isomorphism $u_x : 1_x\circ 1_x \Rightarrow 1_x$ in the category $\Hom_{\frakC}(x,x)$, called the unit constraints of $\frakC$;

    \item for every quadruple $w,x,y,z$, an isomorphism from
    $$
    \Hom_{\frakC}(y,z)\times\Hom_{\frakC}(x,y)\times\Hom_{\frakC}(w,x)\to\Hom_{\frakC}(w,z), \ \ (\gamma, \beta, \alpha) \mapsto \gamma\circ (\beta \circ \alpha)
    $$
    to
    $$
    \Hom_{\frakC}(y,z)\times\Hom_{\frakC}(x,y)\times\Hom_{\frakC}(w,x)\to\Hom_{\frakC}(w,z), \ \ (\gamma, \beta, \alpha) \mapsto (\gamma\circ \beta) \circ \alpha,
    $$
    written as $a_{\gamma,\beta,\alpha}: \gamma \circ (\beta \circ \alpha) \Rightarrow (\gamma \circ \beta) \circ \alpha$, called the associativity constraints of $\frakC$;
\end{enumerate}
satisfying the conditions that
\begin{enumerate}
    \item[(i)] for any $x, y\in\Ob\frakC$, the functors $\Hom_{\frakC}(x,y) \to \Hom_{\frakC}(x,y), \alpha \mapsto \alpha\circ 1_x$ and $\Hom_{\frakC}(x,y) \to \Hom_{\frakC}(x,y), \alpha \mapsto 1_y\circ \alpha$ are fully faithful;

    \item[(ii)] for any quadruple of composable morphisms $v {\buildrel{\mu}\over{\to}} w {\buildrel{\alpha}\over{\to}} x {\buildrel{\beta}\over{\to}} y {\buildrel{\gamma}\over{\to}} z$, there is a commutative diagram in $\Hom_{\frakC}(v,z)$
    $$
    \xymatrix{& \gamma \circ ((\beta \circ \alpha) \circ \mu) \ar@{=>}[r] & (\gamma \circ (\beta \circ \alpha)) \circ \mu \ar@{=>}[dr] &\\
    \gamma \circ (\beta \circ (\alpha \circ \mu)) \ar@{=>}[ur] \ar@{=>}[dr] &&& ((\gamma \circ \beta) \circ \alpha) \circ \mu\\
    & (\gamma \circ \beta) \circ (\alpha \circ \mu) \ar@{=>}[urr] &&}
    $$
\end{enumerate}
\end{definition}

We often omit the symbol $\circ$ and write $\beta\alpha$ for $\beta\circ\alpha$. There are a horizontal composition $-\star-$, as well as a vertical composition $- \cdot -$, of 2-morphisms. An invertible 2-morphism will be called a \textit{2-isomorphism}. If the associativity constraints are identities, the 2-category is said to be \textit{strict}.

The category $\Cat$ of all small categories (in a suitable universe) is the prototype of 2-categories. Its objects are categories, 1-morphisms are functors and 2-morphisms are natural transformations. It is strict.

Let $\calC$ be a 1-category. It can be regarded as a 2-category by asking the 2-morphisms to be the identities on 1-morphisms. We sometimes refer to such a 1-category an \textit{index category}.

We shall focus on the following (non-strict) 2-category.

\begin{example}
Let $k$ be a commutative ring with identity. We consider the 2-category $\Bim_k$ of bimodules:
\begin{enumerate}
    \item the objects are unital associative $k$-algebras $A,B$ etc;

    \item the 1-morphisms from $A$ to $B$ are $A$-$B$-bimodules $_{A}M_{B} : A \to B$;

    \item the 2-morphisms are bimodule homomorphisms $_{A}M_{B}\Rightarrow _{A}N_{B}$,
\end{enumerate}
in which the composite of 1-morphisms $A {\buildrel{M}\over{\to}} B {\buildrel{N}\over{\to}} C$ is given by $N\circ M := M\otimes_BN$, from $A$ to $C$. The identity 1-morphism $1_A$ is exactly $A$.

The 2-subcategory $\Vec_k$, of 2-vector spaces, consists of objects $A=k^n$, for all $n\ge 1$.
\end{example}

\begin{remark} Based on the concept of a lax functor, the 2-category $\Bim_k$ may be realized as a 2-subcategory of $\Cat$, by assigning an algebra $A$ to its category of right modules $\rMod{A}$, and a 1-morphism $M: A \to B$ to $-\otimes_AM : \rMod{A} \to \rMod{B}$. This, along with its variations, including $\calD\Bim_k$, were considered by \cite{As, Ba}.

We shall not use this realization in this article.
\end{remark}

\subsection{Pseudofunctors}

Here we follow Johnson-Yau \cite{JY} and Lurie \cite{Lu}. We refrain from introducing general lax functors between 2-categories, which we will not use here.

Let $\calC$ be a 1-category. We shall only consider unitary lax functors from $\calC$ to particular 2-categories, especially $\Cat$ and $\Bim_k$. By \cite[Remark 2.2.2.49]{Lu}, it is harmless to assume unitary lax functors to be strictly unitary. 

In this article, we shall focus on a special class of lax functors, namely the {\it strictly unitary pseudofunctors}, defined on a 1-category $\calC$ \cite{JY}, because the motivating examples, including modulations on a quiver and presheaves of $k$-algebras on a category, are of this type. For brevity, we shall abbreviate the terminology to just {\it pseudofunctors}. 

\begin{definition}\label{cpf} Let $\calC$ be a small 1-category and $\frakC$ be a 2-category. A covariant pseudofunctor $\frakS: \calC \to \frakC$ consists of the following data
\begin{enumerate}
    \item for any $x\in\Ob\calC$, an object $\frakS(x)\in\Ob\frakC$,

    \item for any morphism $\alpha:x\to y$, a 1-morphism $\frakS(\alpha) : \frakS(x) \to \frakS(y)$,

    \item for any morphisms $\alpha : x \to y$ and $\beta : y \to z$, a 2-isomorphism $c_{\beta,\alpha} : \frakS(\beta)\frakS(\alpha) \Rightarrow \frakS(\beta\alpha)$,
\end{enumerate}
    satisfying the conditions
    \begin{enumerate}
        \item[(i)] $\frakS(1_x)=\Id_{\frakS(x)}$,

        \item[(ii)] for any $\mu:w \to x$, $\alpha : x \to y$ and $\beta : y \to z$, a commutative diagram 
        $$
\xymatrix{\frakS(\beta)\frakS(\alpha)\frakS(\mu) \ar[rr]^{c_{\beta,\alpha}\star\Id_{\frakS(\mu)}} \ar[d]_{\Id_{\frakS(\beta)}\star c_{\alpha,\mu}} & & \frakS(\beta\alpha)\frakS(\mu) \ar[d]^{c_{\beta\alpha,\mu}}\\
\frakS(\beta)\frakS(\alpha\mu) \ar[rr]_{c_{\beta,\alpha\mu}} & & \frakS(\beta\alpha\mu).}
        $$
    \end{enumerate}
\end{definition}

We shall consider contravariant pseudofunctors as well, so we state its definition for future reference.

\begin{definition}\label{ctpf} Let $\calC$ be a small 1-category. A contravariant pseudofunctor $\frakS: \calC \to \frakC$ consists of the following data
\begin{enumerate}
    \item for any $x\in\Ob\calC$, an object $\frakS(x)\in\frakC$,

    \item for any morphism $\alpha:x\to y$, a 1-morphism $\frakS(\alpha) : \frakS(y) \to \frakS(x)$,

    \item for any morphisms $\alpha : x \to y$ and $\beta : y \to z$, a 2-isomorphism $c_{\alpha,\beta} : \frakS(\alpha)\frakS(\beta) \Rightarrow \frakS(\beta\alpha)$,
\end{enumerate}
    satisfying the conditions
    \begin{enumerate}
        \item[(i)] $\frakS(1_x)=\Id_{\frakS(x)}$,

        \item[(ii)] for any $\mu:w \to x$, $\alpha : x \to y$ and $\beta : y \to z$, a commutative diagram 
        $$
\xymatrix{\frakS(\mu)\frakS(\alpha)\frakS(\beta) \ar[rr]^{\Id_{\frakS(\mu)}\star c_{\alpha,\beta}} \ar[d]_{c_{\mu,\alpha}\star\Id_{\frakS(\beta)}} & & \frakS(\mu)\frakS(\beta\alpha) \ar[d]^{c_{\mu,\beta\alpha}}\\
\frakS(\alpha\mu)\frakS(\beta) \ar[rr]_{c_{\alpha\mu,\beta}} & & \frakS(\beta\alpha\mu).}
        $$
    \end{enumerate}
\end{definition}

Throughout this paper, we shall insist on the strictly unitary condition as it is convenient for our applications in representation theory.

\begin{remark} Let $\frakM : \calC \to \frakC$ be a pseudofunctor. Suppose $c$ is an object in $\Ob\frakC$. One can define a \textit{constant pseudofunctor} $\underline{c} : \calC \to \frakC$, sending every $x\in\Ob\calC$ to $c$ and every $\alpha\in\Mor\calC$ to $1_c$ (the choices of $c_{\beta,\alpha}$'s are clear). Later on, we shall deal with the constant pseudofunctor $\underline{k} : \calC \to \Bim_k$, which sends every object to $k$ and every morphism of $\calC$ to the bimodule $k$. 

General lax (and colax) functors are also studied in representation theory, see for example \cite{As}.
\end{remark}

\subsection{Lax (natural) transformations}

We often need to compare two pseudofunctors.

\begin{definition} Let $\frakM, \frakN : \calC \to \frakC$ be two pseudofunctors. A lax (natural) transformation $\calV : \frakM \to \frakN$ consists of the following data
\begin{enumerate}
\item for each $x \in \Ob\calC$, a 1-morphism $\calV(x) : \frakM(x) \to \frakN(x)$,

\item for each $\alpha : x \to y$, a 2-morphism $\calV(\alpha): \frakN(\alpha)\circ\calV(x) \Rightarrow \calV(y)\circ\frakM(\alpha)$ in $\frakC$ as follows

$$
\xymatrix{\frakM(x) \ar[rr]^{\frakM(\alpha)} \ar[d]_{\calV(x)} & & \frakM(y) \ar[d]^{\calV(y)} \\
\frakN(x) \ar[rr]_{\frakN(\alpha)} \ar@{=>}[urr] & & \frakN(y)}
$$
\end{enumerate}
satisfying the following commutative diagrams:
\begin{enumerate}
\item (lax unity) 
$$
\xymatrix{\Id_{\frakN(x)}\circ\calV(x) \ar[r] \ar[d]_{=} & \calV(x) \ar[r] & \calV(x)\circ\Id_{\frakM(x)} \ar[d]^{=}\\
\frakN(1_x)\circ\calV(x) \ar[rr]_{\calV(1_x)} && \calV(x)\circ\frakM(1_x),}
$$
\item (lax naturality) and for every $\alpha : x \to y$ and $\beta : y \to z$,
$$
\xymatrix{\frakN(\beta)\circ(\calV(y)\circ\frakM(\alpha)) \ar[r] & (\frakN(\beta)\circ\calV(y))\circ\frakM(\alpha) \ar[r] & (\calV(z)\circ\frakM(\beta))\circ\frakM(\alpha) \ar[d]\\
\frakN(\beta)\circ(\frakN(\alpha)\circ\calV(x)) \ar[u] && \calV(z)\circ(\frakM(\beta)\circ\frakM(\alpha)) \ar[d] \\
(\frakN(\beta)\circ\frakN(\alpha))\circ\calV(x) \ar[u] \ar[r] & \frakN(\beta\alpha)\circ\calV(x) \ar[r] & \calV(z)\circ\frakM(\beta\alpha).}
$$
\end{enumerate}
\end{definition}

Natural transformations between two strict functors between 1-categories are lax transformations.

\begin{remark}
\begin{enumerate}
\item If the 2-morphism in (2) of the above definition is invertible, then such a lax transformation is called {\rm strong}. In the theory of fibred categories over $\calC$ \cite{Stack}, only strong transformations are taken into account.

\item If the 2-morphism in (2) is reversed, then the transformation is called {\rm oplax}. It would be interesting to learn its role in representation theory.
\end{enumerate}
\end{remark}

\subsection{Modifications} The category of all pseudofunctors from $\calC$ to $\frakC$ is itself a 2-category, with objects pseudofunctors, and 1-morphisms lax transformations. The 2-morphisms are called \textit{modifications}, see for instance \cite{JY}.

\begin{definition}\label{modi} Let $\frakM$ and $\frakN$ be two pseudofunctors $\calC \to \frakC$, and $\calV, \calW : \frakM \to \frakN$ be two lax transformations. A modification $T : \calV \to \calW$ assigns to each $x\in\Ob\calC$ a 2-morphism $T_x : \calV(x) \Rightarrow \calW(x)$ such that for any $\alpha : x \to y$ in $\calC$, the following diagram commutes
$$
\xymatrix{\frakN(\alpha)\circ\calV(x) \ar@{=>}[rr]^{\Id_{\frakN(\alpha)}\star T_x} \ar@{=>}[d]_{\calV(\alpha)} & & \frakN(\alpha)\circ\calW(x) \ar@{=>}[d]^{\calW(\alpha)}\\
\calV(y)\circ\frakM(\alpha) \ar@{=>}[rr]_{T_y\star\Id_{\frakM(\alpha)}} & & \calW(y)\circ\frakM(\alpha).}
$$
\end{definition}

The category of all 2-categories is a 3-category, with objects 2-categories, 1-morphisms lax functors, 2-morphisms lax transformations and 3-morphisms modifications. It is common to use $\Hom(\frakC,\frakD)$ for the set of 1-morphisms between two 2-cateories, $2\Hom(\frakM,\frakN)$ for the set of lax transformations between two lax functors, and $3\Hom(\calV,\calW)$ for the set of modifications between transformations. We write $\Hom_{\frakC^{\calC}}(\frakM,\frakN)$ for the hom category consisting of relevant 2- and 3-moprhisms, as objects and morphism.

\subsection{2-limits} We recall from \cite{JY} the concept of a lax bilimit and, being consistent and concise, here we will call it a 2-limit. Let $\calC$ be a 1-category and $\frakC$ be a 2-category. For brevity, we write $\frakC^{\calC}$ for the 2-category, whose objects are lax functors from $\calC$ to $\frakC$, 1-morphisms are lax transformations and 2-morphisms are modifications. Given two peudofunctors $\frakM,\frakN : \calC \to \frakC$, $\Hom_{\frakC^{\calC}}(\frakM,\frakN)$ is the hom category.

\begin{definition} Let $\frakM : \calC \to \frakC$ be a pseudofunctor. Suppose $\Ob\calC$ is a set. A 2-limit of $\frakM$ in $\frakC$ is a pair $(l,\pi)$, where $l$ is an object in $\frakC$ and $\pi : \underline{l} \to \frakM$ is a lax transofrmation, such that for each object $c$ in $\Ob\frakC$, there is a category equivalence
$$
\pi_* : \Hom_{\frakC}(c,l){\buildrel{\simeq}\over{\to}}\Hom_{\frakC^{\calC}}(\underline{c},\frakM),
$$
induced by $\pi$.
\end{definition}

Be aware that this is the 2-limit of a pseudofunctor of $\frakM$, not its pseudo 2-limit, see \cite[Definition 5.1.1]{JY} for the difference. It is well-known that the 2-limit of $\frakM$ is unique up to equivalence, if it exists.

We shall demonstrate shortly that all the above abstract categorical constructions encode important representation-theoretic information.

\section{Modulated categories and their representations} \label{mc}

In this section, we demonstrate how a pseudofunctor organizes interesting information, as a generalization of the modulated quiver introduced by P. Gabriel \cite{Ga}, investigated and generalized by V. Dlab, C. M. Ringel \cite{DR}, D. Simson \cite{Si} and F. Li \cite{Li}, among many representation theorists. 

Our work stems from the 2-representation theory of finite groups \cite{GK}, but includes the modulated quivers (species) and their representations \cite{DR, Ga, Li, Si}, as well as the presheaves of $k$-algebras and their modules \cite{Stack, KS, WX}.

\subsection{Modulations and comodulations}

By definition, a pseudofunctor is an assignment from an index 1-category $\calC$ to a 2-category $\frakC$. From now on, we shall focus on pseudofunctors with values in $\Bim_k$. 

\begin{definition} Let $\calC$ be a small category. 
\begin{enumerate}
\item A (k-)modulation of $\calC$ is a covariant pseudofunctor $\frakM : \calC \to \Bim_k$, and  $(\calC,\frakM)$ is called a (k-)modulated category.

\item A (k-)comodulation of $\calC$ is a contravariant pseudofunctor $\frakW : \calC \to \Bim_k$, and $(\calC,\frakW)$ is called a (k-)comodulated category.
\end{enumerate}
\end{definition}

Be aware that we then have $\frakS(1_x)=\Id_{\frakS(x)}=\frakS(x)$, as a $\frakS(x)$-$\frakS(x)$-bimodule, $\forall x \in\Ob\calC$. Under the circumstance, the commutative diagram in Definition \ref{cpf} reads as follows: for a covariant pseudofunctor and for any $\mu:w \to x$, $\alpha : x \to y$ and $\beta : y \to z$, a commutative diagram 
$$
\xymatrix{\frakS(\mu)\otimes_{\frakS(x)}\frakS(\alpha)\otimes_{\frakS(y)}\frakS(\beta) \ar[rrr]^{\Id_{\frakS(\mu)}\otimes c_{\beta,\alpha}} \ar[d]_{c_{\alpha,\mu}\otimes\Id_{\frakS(\beta)}} & & & \frakS(\mu)\otimes_{\frakS(x)}\frakS(\beta\alpha) \ar[d]^{c_{\beta\alpha,\mu}} \\
\frakS(\alpha\mu)\otimes_{\frakS(y)}\frakS(\beta) \ar[rrr]_{c_{\beta,\alpha\mu}} & & & \frakS(\beta\alpha\mu),}
$$    
    and for contravariant one in Definition \ref{ctpf}
$$
\xymatrix{\frakS(\beta)\otimes_{\frakS(y)}\frakS(\alpha)\otimes_{\frakS(x)}\frakS(\mu) \ar[rrr] \ar[d] & & & \frakS(\beta\alpha)\otimes_{\frakS(x)}\frakS(\mu) \ar[d] \\
\frakS(\beta)\otimes_{\frakS(y)}\frakS(\alpha\mu) \ar[rrr] & & & \frakS(\beta\alpha\mu).}
$$

Since any quiver generates a free category, the modulation generalizes the concept of a modulated quiver, or a species, where $\frakM(x)$ is only assumed to be a division ring \cite{Ga, DR,Si} in the original setting (later extended to arbitrary algebras in \cite{Li} under the term of a pseudo-modulation). 

We are particularly interested in the following case in which the formulation is not seen in the literature.

\begin{example}\label{presheaf}
Let $\calC$ be a small category. A presheaf of $k$-algebras $\frakR$ gives rise to a modulation on $\calC$ by setting a bimodule along with the algebra homomorphism $\frakR(\alpha) : \frakR(y)\to\frakR(x)$
$$
\frakM_{\frakR}(\alpha)= _{\frakR(x)}\frakR(x)_{\frakR(y)} : \frakR(x) \to \frakR(y), \forall \alpha :x \to y.
$$

In fact, the presheaf of algebras also provides a comodulation, via
$$
\frakW_{\frakR}(\alpha)=_{\frakR(y)}\frakR(x)_{\frakR(x)} : \frakR(y) \to \frakR(x), \forall \alpha :x \to y.
$$ 
This comodulation, along with its representations, was studied in \cite{GS88, WX}, for example.
\end{example}

Upon the realization of $\Bim_k$ inside $\Cat$, the modulation affords the restriction $\rMod\frakR(x) \to \rMod\frakR(y)$, along $\frakR(\alpha)$, while the comodulation leads to the induction $\rMod\frakR(y) \to \rMod\frakR(x)$. We may pass from here to the work of  Balmer \cite{Ba}.

\begin{remark} It is possible to replace $\Bim_k$ by a larger 2-category of bifunctors $\Bif_k$, whose objects are (small) k-linear categories, 1-morphisms are additive bifunctors, and 2-morphisms are natural transformations of additive bifunctors. This was used by Asashiba \cite{As}. Since this doesn't pose much more difficulties, at least in our situation, but requires higher technicalities in terms of terminologies, we refrain from pursing this generality. 
\end{remark}

\subsection{Representations}

In the sense of Ganter-Kapranov \cite{GK}, a pseudofuctor $\frakM : \calC \to \Vec_k$ would be called a 2-representation of $\calC$. However, it is different from the notion that we will define next. 

\begin{definition} A representation of the modulated quiver $(\calC,\frakM)$ is a lax transformation $\calV : \underline{k} \to \frakM$. Denote by $\Rep_k(\calC,\frakM)=\Hom_{\Bim_k^{\calC}}(\underline{k},\frakM)$, and call it the category of representations.
\end{definition}

The category $\Rep_k(\calC,\frakM)=\Hom_{\Bim_k^{\calC}}(\underline{k},\frakM)$ is what Ganter-Kapranov \cite{GK} called the category of $G$-equivariant objects. If we unwrap the abstract concept, the definition is rewritten in the following equivalent form.

\begin{definition}\label{repmc}
A representation $\calV$ of the modulated category $(\calC,\frakM)$ consists of the following data
\begin{enumerate}
    \item for any $x\in\Ob\calC$, a right $\frakM(x)$-module $\calV(x)$, and

    \item for any $\alpha : x \to y$, a morphism of right $\frakM(y)$-modules 
    $$
    \calV(\alpha) : \calV(x)\otimes_{\frakM(x)}\frakM(\alpha) \to \calV(y) ,
    $$
\end{enumerate}
satisfying the condition that there is a commutative diagram, for any $\alpha : x \to y$ and $\beta: y \to z$, 
$$
\xymatrix{\calV(x)\otimes_{\frakM(x)}\frakM(\alpha)\otimes_{\frakM(y)}\frakM(\beta) \ar[d]_{\Id_{\calV(x)}\otimes c_{\alpha,\beta}} \ar[rrr]^{\calV(\alpha)\otimes\Id_{\frakM(\beta)}} & & & \calV(y)\otimes_{\frakM(y)}\frakM(\beta) \ar[d]^{\calV(\beta)} \\
\calV(x)\otimes_{\frakM(x)}\frakM(\beta\alpha) \ar[rrr]_{\calV(\beta\alpha)}  & & & \calV(z).}
$$
\end{definition}

Note that $\calV(\alpha)$ is a $\underline{k}(x)$-$\frakM(y)$-bimodule homomorphism, thus a 2-morphism in $\Bim_k$. A representation looks like a covariant functor, but it does not supply a reasonable morphism from $\calV(x)$ to $\calV(y)$ for every $\alpha : x \to y$, in general. 

\begin{remark}
\begin{enumerate}
\item If $\calC$ is the free category generated by a quiver, then the above definition coincides with the definition of a representation of a modulated quiver \cite{Ga, DR}, which is a cornerstone in algebra representation theory. 

\item By the standard adjunction between $\otimes$ and $\Hom$, a $\frakM(y)$-module homomorphism $\calV(\alpha)$ corresponds to a $\frakM(x)$-homomorphism of right modules, still written as $\calV(\alpha)$,
$$
\calV(\alpha) : \calV(x) \to \Hom_{\frakM(x)}(\frakM(\alpha) ,\calV(y)).
$$

\end{enumerate}
\end{remark}

\begin{definition} Let $\calV, \calW : \underline{k} \to \frakM$ be two representations of the modulated category $(\calC,\frakM)$. Then we call a modification $T : \calV \to \calW$ a morphism of representations.
\end{definition}

Again, we may interpret the concept as follows. Based on Definition \ref{modi}, a morphism $T$ consists of $\underline{k}(x)$-$\frakM(x)$-bimodule homomorphism $T_x : \calV(x) \to \calW(x)$, $\forall x\in\Ob\calC$, satisfying the following commutative diagram 
$$
\xymatrix{\calV(x)\otimes_{\frakM(x)}\frakM(\alpha)=\frakM(\alpha)\circ\calV(x) \ar@{=>}[rr]^{\Id_{\frakM(\alpha)}\star T_x} \ar@{=>}[d]_{\calV(\alpha)} & & \frakM(\alpha)\circ\calW(x)=\calW(x)\otimes_{\frakM(x)}\frakM(\alpha) \ar@{=>}[d]^{\calW(\alpha)}\\
\underline{k}(\alpha)\otimes_{\underline{k}(y)}\calV(y)=\calV(y)\circ\underline{k}(\alpha) \ar@{=>}[rr]_{T_y \star\Id_{\underline{k}(\alpha)}} & & \calW(y)\circ\underline{k}(\alpha)=\underline{k}(\alpha)\otimes_{\underline{k}(y)}\calW(y).}
$$

Therefore, we obtain the following equivalent definition, which is the one used in the literature \cite{Ga, DR}.

\begin{definition} Let $\calV, \calW$ be two representations of the modulated category $(\calC,\frakM)$. A morphism $T : \calV \to \calW$ consists of a set of $\frakM(x)$-homomorphisms 
$$
T_x : \calV(x) \to \calW(x),
$$ 
satisfying the condition that $T_y\calV(\alpha)=\calW(\alpha)(T_x\otimes\Id_{\frakM(\alpha)})$, for any $\alpha : x \to y$. 
\end{definition}

From here, one can easily define the kernel and cokernel of a morphism. It is straightforward to see that $\Rep_k(\calC,\frakM)$, the category of all representations of $(\calC,\frakM)$, is abelian. This is known for modulated quivers.

For future reference, we also spell out the definition of a representation of a comodulated category $(\calC,\frakW)$. A representation is a lax transformation $\calV : \underline{k} \to \frakW$ and a morphism of representations is a modification $T : \calV \to \calW$. They have the following equivalent characterizations.

\begin{definition}\label{CMR}
A representation $\calV$ of the comodulated category $(\calC,\frakW)$ consists of the following data
\begin{enumerate}
    \item for any $x\in\Ob\calC$, a right $\frakW(x)$-module $\calV(x)$, and

    \item for any $\alpha : x \to y$, a morphism of right $\frakW(x)$-modules 
    $$
    \calV(\alpha) : \calV(y)\otimes_{\frakW(y)}\frakW(\alpha) \to \calV(x) ,
    $$
\end{enumerate}
satisfying the condition that there is a commutative diagram, for any $\alpha : x \to y$ and $\beta: y \to z$, 
$$
\xymatrix{\calV(z)\otimes_{\frakW(z)}\frakW(\beta)\otimes_{\frakW(y)}\frakW(\alpha) \ar[d]_{\Id_{\calV(z)}\otimes c_{\alpha,\beta}} \ar[rrr]^{\calV(\beta)\otimes\Id_{\frakW(\alpha)}} & & & \calV(y)\otimes_{\frakW(y)}\frakW(\alpha) \ar[d]^{\calV(\alpha)} \\
\calV(z)\otimes_{\frakW(z)}\frakW(\beta\alpha) \ar[rrr]_{\calV(\beta\alpha)}  & & & \calV(x).}
$$
\end{definition}

The above representation also does not generate a contravariant functor. However, when $\frakW_{\frakR}$ is given by a presheaf of algebras $\frakR$, condition (2) is equivalent to giving a set of $\frakR(y)$-homomoprhisms $\calV(y) \to \calV(x)$, via the usual adjunction. This makes $\calV$ to be a right $\frakR$-module, which was studied in algebraic geometry \cite{Stack, KS} and in representation theory \cite{WX}.

\begin{remark}
When $\frakR$ is a presheaf of $k$-algebras on $\calC$ which provides a comodulation $\frakW_{\frakR}$ by $\frakW_{\frakR}(\alpha)=_{\frakR(y)}\frakR(x)_{\frakR(x)}$, a $\frakW_{\frakR}(x)$-homomorphism (i.e. an $\frakR(x)$-homomorphism)
$$
\calV(\alpha) : \calV(y)\otimes_{\frakW(y)}\frakW_{\frakR}(\alpha)=\calV(y)\otimes_{\frakR(y)}\frakR(x)\to\calV(x)
$$ 
is identified with a $\frakW_{\frakR}(y)$-homomorphism (i.e. an $\frakR(y)$-homomorphism) $\calV(y) \to \calV(x)$, via the usual adjunction between the induction and restriction along $\frakR(\alpha) : \frakR(y) \to \frakR(x)$. Therefore, a representation $\calV$ of the modulated category $(\calC,\frakW_{\frakR})$ becomes a presheaf of $k$-modules, with a right $\frakR$-module structure. This is the case studied in \cite{WX}.
\end{remark}

\begin{definition} Let $\calV, \calW$ be two representations of the comodulated category $(\calC,\frakW)$. A morphism $\Phi : \calV \to \calW$ consists of a set of $\frakW(x)$-homomorphisms 
$$
\Phi(x) : \calV(x) \to \calW(x),
$$ 
satisfying the condition $\Phi(x)\calV(\alpha)=\calW(\alpha)(\Phi(y)\otimes\Id_{\frakW(\alpha)})$, for any $\alpha : x \to y$.    
\end{definition}

All representations of $(\calC,\frakW)$ also form an abelian category $\Rep_k(\calC,\frakW)$, which is the same as $\Hom_{\Bim_k^{\calC^{op}}}(\underline{k},\frakW)$.

\section{Modulated category algebras} \label{mca}

Suppose $\calC$ is a small category. A pseudofunctor immediately offers an algebra that is of great interest. Consider the 2-limit of $\frakM : \calC \to \Bim_k$. If it exists, then it is a pair $(L,\pi)$, where $L$ is a unital associative $k$-algebra (an object of $\Ob\Bim_k$) and $\pi : \underline{L} \to \frakM$ is a lax transformation, equipped with a category equivalence, induced by $\pi$,
$$
\Hom_{\Bim_k}(A,L) \simeq \Hom_{\Bim_k^{\calC}}(\underline{A},\frakM),
$$
for every $A \in \Ob\Bim_k$. Note that $\Hom_{\Bim_k}(A,L)$ is simply the category of $A$-$L$-bimodules. When $A=k$, it reduces to the category of right $L$-modules.

The upshot is that we can explicitly construct the 2-limit $L$ of $\frakM : \calC \to \Bim_k$.

\subsection{Modulated and comodulated category algebras}

\begin{definition} Let $(\calC,\frakM)$ be a modulated category. The modulated category algebra $\frakM[\calC]$ is defined as
$$
\bigoplus_{\alpha\in\Mor\calC}\frakM(\alpha),
$$
in which the multiplication of $m_{\alpha}\in\frakM(\alpha)$ and $m_{\beta}\in\frakM(\beta)$ is given by the structure isomorphisms of the modulation
    	\begin{eqnarray}
		m_{\alpha}\ast m_{\beta}=
		\begin{cases}
			c_{\beta,\alpha}(m_{\alpha}\otimes m_{\beta}) \in \frakM(\beta\alpha),       & \text{if} ~{\rm dom}(\beta)={\rm cod}(\alpha); \notag \\
			0, & {\rm otherwise}.
		\end{cases}
	\end{eqnarray} 
\end{definition}

Note that $c_{\beta,\alpha} : \frakM(\beta)\circ\frakM(\alpha)=\frakM(\alpha)\otimes\frakM(\beta)\to \frakM(\beta\alpha)$ is an isomorphism for any $\alpha : x\to y$ and $\beta : y \to z$.

\begin{remark}
    \begin{enumerate}
        \item The algebra $\frakM[\calC]$ is associative.
        
        \item If $\Ob\calC$ is finite, then the algebra $\frakM[\calC]$ has an identity $\sum_{x\in\Ob\calC}1_{\frakM(x)}$, where $1_{\frakM(x)}$ is the identity of $\frakM(x)=\frakM(1_x)$.

        \item If $\Mor\calC$ is finite and every $\frakM(\alpha)$ is a finite-dimensional $k$-module, then the algebra $\frakM[\calC]$ is a finite-dimensional $k$-algebra.

        \item It is not clear if $\frakM[\calC]$ admits an alternative formation via the Grothendieck construction as in \cite{WX}.
    \end{enumerate}
\end{remark}

For future reference, here is the comodulated version.

\begin{definition} Let $(\calC,\frakW)$ be a comodulated category. The comodulated category algebra $\frakW[\calC]$ is defined as
$$
\bigoplus_{\alpha\in\Mor\calC}\frakW(\alpha),
$$
in which the multiplication of $w_{\alpha}\in\frakW(\alpha)$ and $w_{\beta}\in\frakW(\beta)$ is given by the structure isomorphisms of the comodulation
    	\begin{eqnarray}
		w_{\beta}\ast w_{\alpha}=
		\begin{cases}
			c_{\alpha,\beta}(w_{\beta}\otimes w_{\alpha})\in\frakW(\beta\alpha),       & \text{if} ~{\rm dom}(\beta)={\rm cod}(\alpha); \notag \\
			0, & {\rm otherwise}.
		\end{cases}
	\end{eqnarray} 
\end{definition}

Note that $c_{\alpha,\beta} : \frakW(\alpha)\circ\frakW(\beta)=\frakW(\beta)\otimes\frakW(\alpha)\to \frakW(\beta\alpha)$ is an isomorphism for any $\alpha : x\to y$ and $\beta : y \to z$.

\begin{definition} Suppose $\Ob\calC$ is finite. Given $\frakM : \calC \to \Bim_k$, one can construct a lax transformation $\pi : \underline{\frakM[\calC]} \to \frakM$ such that
\begin{enumerate}
\item for each $x\in\Ob\calC$, $\pi(x) : \frakM[\calC] \to \frakM(x)=\frakM(1_x)$ is $\frakM[\calC]1_{\frakM(x)}$, a $\frakM[\calC]$-$\frakM(x)$-bimodule; (here $1_{\frakM(x)}$ is the identity of $\frakM(x)$.)

\item for each $\alpha : x \to y$ in $\Mor\calC$, 
$$
\pi(\alpha) : \frakM[\calC]1_{\frakM(x)}\otimes_{\frakM(x)}\frakM(\alpha) \to \frakM[\calC]\otimes_{\frakM[\calC]}\frakM[\calC]1_{\frakM(y)}
$$ 
is given by the multiplication, a canonical $\frakM[\calC]$-$\frakM(y)$-bimodule homomorphism.
\end{enumerate}
\end{definition}

There is an analogous lax transformation $\pi : \underline{\frakW[\calC]} \to \frakW$ for a comodulation $\frakW$ on an object-finite category $\calC$.

\subsection{2-limits and module categories} When $\Ob\calC$ is finite, $\frakM[\calC]$ becomes an object of $\Bim_k$. We shall prove that $(\frakM[\calC],\pi)$ is the 2-limit of $\frakM : \calC \to \Bim_k$. Then some statements about module categories will follow.  

\begin{theorem} Let $A$ be an object of $\Bim_k$. Soppose $\Ob\calC$ is finite, and $\frakM : \calC \to \Bim_k$ is a pseudofunctor. Then $\pi$ induces a functor
$$
\pi_* : \Hom_{\Bim_k}(A,\frakM[\calC]) \to \Hom_{\Bim_k^{\calC}}(\underline{A},\frakM),
$$
which becomes an equivalence for each $A\in\Ob\Bim_k$.
\end{theorem}

By definition, $\Hom_{\Bim_k}(A,\frakM[\calC])$ is exactly the category of $A$-$\frakM[\calC]$-bimodules.

\begin{proof} Firstly we describe $\pi_*$. There is a standard construction in general situation, see \cite[Proposition 5.1.10]{JY}. Let $M : A \to \frakM[\calC]$ be an object in $\Hom_{\Bim_k}(A,\frakM[\calC])$, that is, a $A$-$\frakM[\calC]$-bimodule. Then it gives rise to a lax transformation $\underline{M} : \underline{A} \to \underline{\frakM[\calC]}$, such that $\underline{M}_x=M, \forall x\in\Ob\calC$, and $\underline{M}(\alpha)= M\otimes_{\frakM[\calC]}\frakM[\calC] \to A\otimes_A M, \forall \alpha :x \to y$, is given by the identity map on the bimodule $M$. Composing with $\pi$ it results in a lax transformation $\pi_*(M)=\pi\star\underline{M}$, that is, an object of $\Hom_{\Bim_k^{\calC}}(\underline{A},\frakM)$. 

If $\phi : M \to N$ is a morphism in $\Hom_{\Bim_k}(A,\frakM[\calC])$, that is, a $A$-$\frakM[\calC]$-bimodule homomorphism, then it yields a modification $\underline{M} \to \underline{N}$, based on which one builds a modification $T=\pi_*(\phi) : \pi_*(\underline{M}) \to \pi_*(\underline{N})$, that is, a morphism in $\Hom_{\Bim_k^{\calC}}(\underline{A},\frakM)$. At each $x\in\Ob\calC$, 
$$
T_x : M\otimes_{\frakM[\calC]}\frakM[\calC]1_{\frakM(x)} \to N\otimes_{\frakM[\calC]}\frakM[\calC]1_{\frakM(x)}
$$ 
is the $A$-$\frakM(x)$-bimodule homomorphism $\phi\otimes\Id_{\frakM[\calC]1_{\frakM(x)}}$.

Secondly, if $\calV$ is an object of $\Hom_{\Bim_k^{\calC}}(\underline{A},\frakM)$, that is, a lax transformation, then $\calV(x) : A \to \frakM(x)$ is a $A$-$\frakM(x)$-bimodule, for all $x$, and 
$$
\calV(\alpha): \calV(x)\otimes_{\frakM(x)}\frakM(\alpha) \to \calV(y)
$$ 
is a $A$-$\frakM(y)$-bimodule homomorphism, for all $\alpha : x \to y$. It leads to a natural $A$-$\frakM[\calC]$-bimodule structure on $M_{\calV}=\oplus_{x\in\Ob\calC}\calV(x)$ as follows. For any $a\in A$, $v_x\in\calV(x)$ and $m_{\alpha}\in\frakM(\alpha)$, we set
    	\begin{eqnarray}
		av_x m_{\alpha} =
		\begin{cases}
			\calV(\alpha)(av_x\otimes m_{\alpha}),       & \text{if} ~\dom(\alpha)=x; \notag \\
			0, & {\rm otherwise}.
		\end{cases}
	\end{eqnarray} 
Assume $a,b \in A$, $\alpha: x\to y$ and $\beta:y\to z$ to be two composable morphisms. If $m_{\alpha}\in\frakM(\alpha)$ and $m_{\beta}\in\frakM(\beta)$, then, by the definition of a lax transformation,
$$
b(av_x m_{\alpha}) m_{\beta}=\calV(\beta)\{b[\calV(\alpha)(av_x\otimes m_{\alpha})]\otimes m_{\beta}\}=ba\calV(\beta\alpha)(v_x\otimes c_{\beta,\alpha}(m_{\alpha}\otimes m_{\beta})),
$$
but the rightmost term is $ba\calV(\beta\alpha)(v_x\otimes(m_{\alpha}*m_{\beta}))=(ba)v_x(m_{\alpha}* m_{\beta})$, with $m_{\alpha}*m_{\beta}\in\frakM(\beta\alpha)$. This makes $M_{\calV}$ an object of $\Hom_{\Bim_k}(A,\frakM[\calC])$. 

Suppose $T : \calV \to \calW$ is a modification, that is, a morphism in $\Hom_{\Bim_k^{\calC}}(\underline{A},\frakM)$. Then $T_x : \calV(x) \to \calW(x)$ is a $A$-$\frakM(x)$-bimodule homomorphism, for all $x$, and $T_y\calV(\alpha)=\calW(\alpha)(T_x\otimes\Id_{\frakM(\alpha)})$, for any $\alpha : x \to y$. Therefore, it induces a map $\phi_T=\oplus_xT_x : M_{\calV} \to M_{\calW}$ such that $\phi_T(v_x)=T_x(v_x)$, for any $v_x\in\calV(x)$, $\forall x \in \Ob\calC$. Since, for any $a\in A$, $v_x\in\calV(x)$ and $\alpha: x \to y$, we have 
\begin{align*}
\phi_T(av_xm_{\alpha}) & =T_y\calV(\alpha)(av_x\otimes m_{\alpha})\\
& =\calW(\alpha)(T_x\otimes\Id_{\frakM(\alpha)})(av_x\otimes m_{\alpha})\\
& =\calW(\alpha)(T_x(av_x)\otimes m_{\alpha})\\
& =\calW(\alpha)(aT_x(v_x)\otimes m_{\alpha})\\
& =a\phi_T(v_x)m_{\alpha},
\end{align*}
the map $\phi_T : M_{\calV} \to M_{\calW}$ is indeed a $A$-$\frakM[\calC]$-bimodule homomorphism. Based on the assignments $\calV\mapsto M_{\calV}$ and $T\mapsto\phi_T$, we obtain a functor 
$$
\iota_*: \Hom_{\Bim_k^{\calC}}(\underline{A},\frakM) \to \Hom_{\Bim_k}(A,\frakM[\calC]).
$$

Thirdly, one can check that these two functors, $\pi_*$ and $\iota_*$, are quasi-inverse to each other. To check $\iota_*\pi_*$, we begin with a $A$-$\frakM[\calC]$-bimodule $M$. It results in an object, a lax transformation, $\calV_M:=\pi_*(M) : \underline{M} \to \underline{\frakM[\calC]} \to \frakM$. On each $x\in\Ob\calC$, it is 
$$
\calV_M(x)= M \otimes_{\frakM[\calC]}\frakM[\calC]1_{\frakM(x)}\cong M1_{\frakM(x)}.
$$
While on each $\alpha : x \to y$, it is the $\frakM[\calC]$-$\frakM(y)$-bimodule homomorphism 
$$
\calV_M(\alpha) : \calV_M(x)\otimes_{\frakM(x)}\frakM(\alpha) \to \calV_M(y),
$$
or more explicitly the natural bimodule homomorphism
$$
\calV_M(\alpha) : M \otimes_{\frakM[\calC]}\frakM[\calC]1_{\frakM(x)}\otimes_{\frakM(x)}\frakM(\alpha) \to M \otimes_{\frakM[\calC]}\frakM[\calC]1_{\frakM(y)},
$$
given by $M1_{\frakM(x)}\otimes_{\frakM(x)}\frakM(\alpha) \to M1_{\frakM(y)}$. It is obvious that
$$
M = \bigoplus_{x\in\Ob\calC} M1_{\frakM(x)} = \bigoplus_{x\in\Ob\calC} \calV_M(x) = M_{\calV_M} = \iota_*\pi_*(M).
$$
Moreover, it is straightforward to verify that this is a bimodule isomorphism, and that $\iota_*\pi_*$ is naturally isomorphic to the identity functor on $\Hom_{\Bim_k}(A,\frakM[\calC])$. We leave $\pi_*\iota_*$ for the interested reader.
\end{proof}

There is a concept of 2-colimit, which we shall not use here. However, an interesting observation is that the Grothendieck construction on a (contravariant) pseudofunctor $\frakF : \calC^{op} \to \Cat$ is the 2-colimit of $\frakF$ \cite[Section 10.2]{JY}, comparable with our construction of the modulated category algebra.

\begin{corollary} Let $A=k$. Then we have an equivalence 
$$
\rMod\frakM[\calC]=\Hom_{\Bim_k}(k,\frakM[\calC]) \to \Hom_{\Bim_k^{\calC}}(\underline{k},\frakM)=\Rep_k(\calC,\frakM).
$$
\end{corollary}

It is well-known that group representations are equivalent to modules of the corresponding group algebra. This was extended to representations of a small category by Mitchell, to representations of modulated quivers by Simson \cite{Si} and to modules of a presheaf of $k$-algebras by \cite{WX}. The above corollary is regarded as a generalization of all these facts.

Now we can present a connection between 2-representations of $\calC$, in the sense of Ganter-Kapranov \cite{GK}, and the representations of a modulated category, traced back to Gabriel \cite{Ga, DR}.

\begin{corollary} Let $\underline{k} : \calC \to \Vec_k$ be the trivial 2-representation of $\calC$ (or the trivial modulation). Then the representations of $(\calC,\underline{k})$ is identified with $k\calC$-modules, where $k\calC$ is the category algebra.

Particularly for a finite group $G$, the representations of $(G,\underline{k})$ are exactly the $kG$-modules.
\end{corollary}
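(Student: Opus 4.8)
The plan is to derive the corollary directly from Theorem \ref{MC} by identifying the modulated category algebra $\underline{k}[\calC]$ of the trivial modulation with the category algebra $k\calC$. First I would unwind the trivial modulation: since $\underline{k}$ sends each object to $k$ and each morphism $\alpha$ to the $k$-$k$-bimodule $\underline{k}(\alpha)=k$, every structure isomorphism
$$
c_{\beta,\alpha}:\underline{k}(\beta)\circ\underline{k}(\alpha)=\underline{k}(\alpha)\otimes_k\underline{k}(\beta)\longrightarrow\underline{k}(\beta\alpha)
$$
is simply the canonical map $k\otimes_k k\cong k$, i.e. multiplication in $k$. Hence, as a $k$-module, $\underline{k}[\calC]=\bigoplus_{\alpha\in\Mor\calC}\underline{k}(\alpha)$ is free on the set $\Mor\calC$.

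Next I would verify that the multiplication of $\underline{k}[\calC]$ coincides with that of $k\calC$. Denoting by $e_{\alpha}$ the basis element of $\underline{k}(\alpha)=k$ corresponding to $1\in k$, the defining formula $m_{\alpha}\ast m_{\beta}=c_{\beta,\alpha}(m_{\alpha}\otimes m_{\beta})$ specializes to $e_{\alpha}\ast e_{\beta}=e_{\beta\alpha}$ whenever $\dom(\beta)=\cod(\alpha)$, and to $0$ otherwise. This is exactly the multiplication in Mitchell's category algebra $k\calC$, so $\underline{k}[\calC]\cong k\calC$ as $k$-algebras; it is unital precisely when $\Ob\calC$ is finite, with identity $\sum_{x\in\Ob\calC}e_{1_x}$.

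Assuming $\calC$ object-finite so that Theorem \ref{MC} applies, I would then read off
$$
\Rep(\calC,\underline{k})\simeq\rMod\underline{k}[\calC]=\rMod k\calC,
$$
which gives the first assertion. For the second, I would specialize to a finite group $G$ viewed as a one-object category: here $\Ob G$ is a singleton, the category algebra $kG$ is the usual group algebra, and the equivalence becomes $\Rep(G,\underline{k})\simeq\rMod kG$, identifying the representations of $(G,\underline{k})$ with $kG$-modules.

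The whole argument is bookkeeping layered on top of Theorem \ref{MC}, so I expect no genuine obstacle. The only points requiring attention are conventional: one must confirm that the composition order encoded in $m_{\alpha}\ast m_{\beta}\in\underline{k}(\beta\alpha)$ matches the chosen convention for the product in $k\calC$, and, in any non-object-finite situation, that \emph{modules} are interpreted appropriately over the then non-unital algebra $k\calC$. For groups, and more generally for object-finite $\calC$, both issues evaporate.
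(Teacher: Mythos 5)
Your argument is correct, but it takes a genuinely different route from the one in the paper. The paper proves the corollary by computing $\Rep_k(\calC,\underline{k})=2\Hom(\underline{k},\underline{k})$ directly from Definition \ref{repmc}: for the trivial modulation every $\calV(\alpha):\calV(x)\otimes_k k\to\calV(y)$ is just a $k$-linear map $\calV(x)\to\calV(y)$ and the coherence square reduces to ordinary functoriality, so a representation of $(\calC,\underline{k})$ \emph{is} a functor $\calC\to\rMod k$, and the classical Mitchell equivalence between functor categories and modules over the category algebra finishes the job. You instead work one level down on the algebra side: you identify $\underline{k}[\calC]\cong k\calC$ via $c_{\beta,\alpha}\colon k\otimes_k k\cong k$ and then invoke Theorem \ref{MC}. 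Both are sound; your route has the advantage of exhibiting the corollary as a literal specialization of Theorem \ref{MC} (reinforcing the paper's slogan that the modulated category algebra governs everything), while the paper's route makes transparent the conceptual point it wants to emphasize, namely that lax transformations out of $\underline{k}$ degenerate to ordinary $1$-representations. Your attention to the two conventional issues --- the order convention $e_\alpha\ast e_\beta=e_{\beta\alpha}$ matching right modules, and the need for $\Ob\calC$ finite so that $k\calC$ is unital and Theorem \ref{MC} applies --- is appropriate; the paper implicitly assumes object-finiteness throughout this section, so nothing is lost.
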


\begin{proof} It follows directly from the above corollary. The category $\Rep_k(\calC,\underline{k})=\Hom(\underline{k},\underline{k})$ can also be computed directly, say, with Definition \ref{repmc}. This the category of all functors $\calC \to \rMod k$, which is equivalent to $\rMod k\calC$.
\end{proof}

The above indicates that the trivial 2-representation $\underline{k}$ of $\calC$ determines all 1-representations of $\calC$. If we consider a modulation as a 2-representation of $\calC$, in the sense of \cite{GK}, then it gives all 1-representations of the algebra $\frakM[\calC]$ (not by decategorification).

Analogously to the main theorem, we have a parallel statement for comodulated categories.

\begin{theorem}\label{CMC} Let $A$ be an object of $\Bim_k$. Soppose $\Ob\calC$ is finite, and $\frakW : \calC \to \Bim_k$ is a contravariant pseudofunctor. Then the 2-limit of $\frakW$ is $(\frakW[\calC],\pi)$, where $\frakW[\calC]$ is the comudulated category algebra, equipped with a functor
$$
\pi_* : \Hom_{\Bim_k}(A,\frakW[\calC]) \to \Hom_{\Bim_k^{\calC}}(\underline{A},\frakW),
$$
which becomes an equivalence for each $A\in\Ob\Bim_k$.
\end{theorem}

We leave the interested reader to formulate its proof and consequences. Finally, the modules of a presheaf of $k$-algebras are the same as the representations of the corresponding comodulated category.

\begin{example} In the example \label{presheaf} for a given presheaf of algebras $\frakR$ we have a modulation and a comodulation on $\calC$: on every $\alpha : x\to y$ the bimodules are $\frakM_{\frakR}(\alpha)=_{\frakR(x)}\frakR(x)_{\frakR(y)}$ and $\frakW_{\frakR}(\alpha)=_{\frakR(y)}\frakR(x)_{\frakR(x)}$, respectively. Then $\frakM_{\frakR}[\calC]$ and $\frakW_{\frakR}[\calC]$ are different algebras, and the second one is the skew category algebra $\frakW_{\frakR}[\calC]=\frakR[\calC]$ that we introduced in \cite{WX}. In fact, $\frakM_{\frakR}[\calC]$ is some sort of a dual version of the skew category algebra.

By the above theorem and the remarks behind Definition \ref{CMR}, $\Rep_k(\calC,\frakW_{\frakR})$ is the same as the category of right $\frakR$-modules. Thus Theorem \ref{CMC} becomes a vast generalization of the main theorem in \cite{WX}! 
\end{example}

\section{Finiteness conditions on modules of a presheaf of algebras}

In \cite{WX}, we considered objects of finite type in the module category $\rMod\frakR$, from the point of view of \cite{Po}. However, these modules have a geometric flavor, and there is a distinct way to introduce modules of finite type, based on slice categories. Here we shall compare these two finiteness conditions and show that they are different.

\subsection{Finiteness conditions on objects of an abelian category} Here we state the finiteness conditions from a categorical viewpoint by Popescu \cite{Po}. 

\begin{definition}
Let $\calA$ be an Ab5 category and $X\in\Ob(\calA)$. A directed set $\{X_{i}\}_{i\in I}$ of subobjects of $X$ is called stationary if there exists an index $i_{0}$ such that $X_{i}\subset X_{i_{0}}$ for any $i\in I$, and it is complete if $\Sigma_{i\in I}X_{i}=X$.
    
If any complete direct set $\{X_{i}\}_{i\in I}$ of subobjects of $X$ is stationary, then $X$ is said to be of finite type.
\end{definition}

It is known that in a module category $\calA=$ Mod-$A$, an object is of finite type if and only if it's a finitely generated $A$-module. 

\subsection{Finiteness conditions on modules of a presheaf of algebras} In the case of Mod-$\frakR$, there is a geometric description of finite type modules, see \cite[18.23.1]{Stack}, which we recall now. 

Note that we will focus on the case where $\calC$ carries the minimal topology, which means that on every object $x\in\Ob\calC$ there is only one covering sieve, the maximal sieve corresponding to the representable functor, or the set of all morphisms ending at $x$. Under the minimal topology, every presheaf is a sheaf. In this situation, the definition is simplified as follows.

\begin{definition} Let $\calV$ be an $\frakR$-module. Then it is of finite type if for every $x\in\Ob\calC$, $\calV|_x$ is a quotient of $\frakR|_x^{n_x}=(\frakR|_x)^{n_x}$ for some positive integer $n_x$.
\end{definition}

Here $\calV|_x$ is the restriction of $\calV$ along the canonical functor $\pi_x : \calC/x \to \calC$, where $\calC/x$ is the \textit{slice category} at $x$. By definition, the objects of $\calC/x$ are of the form $(w,\alpha)$, where $\alpha : w \to x$ is a morphism in $\calC$, and a morphism $(w,\alpha)\to(v,\beta)$ is given by a morphism $\gamma : w\to v$ in $\calC$ such that $\alpha=\beta\gamma$.

\subsection{Comparison of finiteness conditions} We aim to show that the aforementioned finiteness conditions are different on Mod-$\frakR$. There are two canonical right $\frakR$-modules: one is $\frakR$ itself and the other corresponds to the regular $\frakR[\calC]$-module, which as a presheaf $\calP$ of $k$-modules is given by $\calP(x)=\oplus_yk\Hom_{\calC}(x,y)\otimes_k\frakR(x)$. If we denote by $\calP_y$ the $\frakR$-module corresponding to the projective $\frakR[\calC]$-module $1_{\frakR(y)}\frakR[\calC]$ (where $1_{\frakR(y)}$ is an idempotent in the algebra), then it is given by $\calP_y(x)=k\Hom_{\calC}(x,y)\otimes_k\frakR(x)$, $\forall x \in \Ob\calC$.

\begin{proposition} Let $\calC$ be a finite category and $\frakR$ be a presheaf of finite-dimensional $k$-algebra. Then the $\frakR$-modules of finite type are finitely generated $\frakR[\calC]$-modules.
\end{proposition}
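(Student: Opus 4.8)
The plan is to transport both finiteness conditions across the equivalence $\rMod\frakW_{\frakR}[\calC]\simeq\Rep(\calC,\frakW_{\frakR})=\Mod\frakR$ of Theorem \ref{CMC} and argue entirely on the representation side. Given a right $\frakR[\calC]$-module $V$, write $\calV(x)=V1_{\frakR(x)}$, and recall that for $\alpha:x\to y$ the structure map $\calV(\alpha):\calV(y)\to\calV(x)$ is the restriction induced by $\frakR(\alpha):\frakR(y)\to\frakR(x)$. First I would unwind the $\frakR[\calC]$-action on a homogeneous element: for $v\in\calV(z)$ and $w_{\gamma}\in\frakW_{\frakR}(\gamma)$ with $\gamma:w\to z$, one gets $v\cdot w_{\gamma}=\calV(\gamma)(v\otimes 1)\,w_{\gamma}\in\calV(w)$, so the cyclic submodule $v\cdot\frakR[\calC]$ has $w$-component $\sum_{\gamma\in\Hom_{\calC}(w,z)}\calV(\gamma)(v)\frakR(w)$. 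Since $\Ob\calC$ is finite, $1=\sum_{x}1_{\frakR(x)}$ shows that $V$ is generated by finitely many elements if and only if it is generated by finitely many homogeneous ones.

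Next I would prove the clean algebraic reduction: $V$ is finitely generated over $\frakR[\calC]$ \emph{if and only if} each $\calV(x)$ is finitely generated over $\frakR(x)$. The direction ``objectwise $\Rightarrow$ global'' uses only that $\Ob\calC$ is finite, since collecting objectwise generators over the finitely many objects and invoking the idempotents $1_{\frakR(x)}$ (which recover the $\frakR(x)$-action on $\calV(x)$) yields a finite $\frakR[\calC]$-generating family. The converse uses that $\Mor\calC$ is finite: by the cyclic-submodule formula above, each $\calV(w)$ is generated over $\frakR(w)$ by $\{\calV(\gamma)(v_{j}):j,\ \gamma\in\Hom_{\calC}(w,z_{j})\}$, which is a finite set precisely because $\calC$ has finitely many morphisms. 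This is exactly the step where the passage from object-finite (as in Theorem B) to finite is consumed.

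On the geometric side I would exploit that under the minimal topology presheaves are sheaves, and that $\calC/x$ has terminal object $(x,1_{x})$, so that $\Hom_{\frakR|_{x}}(\frakR|_{x}^{\,n},\calV|_{x})\cong\calV(x)^{n}$: a map is given by $n$ sections $s_{i}\in\calV(x)$, and it is an epimorphism exactly when for every $(w,\alpha)\in\calC/x$ the family $\{\calV(\alpha)(s_{i})\}_{i}$ generates $\calV(w)$ over $\frakR(w)$. Thus $\calV$ is of finite type iff for each $x$ there are finitely many sections in $\calV(x)$ whose restrictions along \emph{every} $\alpha:w\to x$ generate $\calV(w)$. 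Evaluating at the terminal object $(x,1_{x})$ gives at once ``finite type $\Rightarrow$ objectwise finitely generated,'' hence, by the second paragraph, ``finite type $\Rightarrow$ finitely generated'' — the half already recorded in Theorem B.

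The remaining converse is where I expect the real difficulty to lie, and it is the main obstacle: to manufacture, for each $x$, finitely many sections in $\calV(x)$ whose restriction along an \emph{arbitrary} $\alpha:w\to x$ generates $\calV(w)$. The delicate point is the universal quantifier over $\alpha$, since restrictions of sections at $x$ all land in $\calV(\alpha)(\calV(x))\frakR(w)$, so one must show that finiteness of $\Mor\calC$ forces these restriction images to exhaust the objectwise generators of $\calV(w)$. I would attack this by taking the sections at $x$ to be assembled from the finitely many restrictions $\calV(\beta)(v_{j})$ of a global generating family along the finitely many morphisms $\beta:x\to z_{j}$, and then checking the generation along each $\alpha$ via the functoriality $\calV(\alpha)\calV(\beta)=\calV(\beta\alpha)$ of restrictions; reconciling this factorization with the full objectwise generation of $\calV(w)$ is precisely the crux on which the finiteness of $\calC$ must be brought to bear decisively.
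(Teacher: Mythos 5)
Your first three paragraphs are correct and, for the direction they actually establish, tighter than the paper's own argument. The reduction ``finitely generated over $\frakR[\calC]$ $\Leftrightarrow$ each $\calV(x)$ finitely generated over $\frakR(x)$'' is sound: the forward implication uses your cyclic-submodule formula and the finiteness of $\Mor\calC$, the backward one uses the idempotents $1_{\frakR(x)}$ and the finiteness of $\Ob\calC$. Evaluating $\calV|_x$ at the terminal object $(x,1_x)$ of $\calC/x$ then gives ``finite type $\Rightarrow$ objectwise finitely generated $\Rightarrow$ finitely generated.'' The paper proves only this implication, and does so less cleanly: mid-proof it assumes $\frakR$ is a presheaf of \emph{finite-dimensional} algebras (a hypothesis absent from the statement) and concludes via finite-dimensionality of $\oplus_x\calV(x)$; your route through objectwise generation avoids that detour entirely.

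The converse you flag as ``the crux'' is not a gap you can close: it is false, and the paper knows it. Theorem B in the introduction says explicitly ``but not vice versa,'' and the example immediately following this proposition exhibits the cyclic (hence finitely generated) projective module $\calP_x=\frakR[\calC]1_x$ over the category with two parallel arrows $\alpha,\beta:x\to y$ and $\frakR=\underline{k}$, for which every morphism $\underline{k}^n|_y\to\calP_x|_y$ is zero, so $\calP_x$ is not of finite type. The obstruction is exactly the universal quantifier you isolated: any section of $\calP_x$ over $y$ lives in $\calP_x(y)=0$, so its restrictions along $\alpha$ and $\beta$ vanish, while $\calP_x(x)=\frakR(x)\neq 0$; hence no finite family of sections at $y$ can generate $\calP_x|_y$ after restriction, even though $\calP_x$ is objectwise finitely generated. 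The word ``exactly'' in the proposition as stated is an error (it contradicts Theorem B and the subsequent example); the correct claim, and the only one the paper proves, is the single inclusion ``finite type $\Rightarrow$ finitely generated.'' You should stop at the end of your third paragraph and abandon the fourth.
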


\begin{proof} Let $\calV$ be an $\frakR$-module of finite type. Then for every $x\in\Ob\calC$, $\calV|_x$ is a quotient of $\frakR|_x^{n_x}=(\frakR|_x)^{n_x}$ for some positive integer $n_x$. Particularly $\calV|_x(x,1_x)=\calV(x)$ is a quotient of $\frakR|_x^{n_x}(x,1_x)=\frakR(x)^{n_x}$. Write $\phi_x : \frakR(x)^{n_x} \to \calV(x)$ for the surjective homomorphism of right $\frakR(x)$-modules. If $\frakR$ is a presheaf of finite-dimensional $k$-algebras, then every $\calV(x)$ is finite-dimensional, therefore the module $M=\oplus_x\calV(x)$ corresponding to $\calV$ is finite-dimensional, and it is a finitely generated $\frakR[\calC]$-module.
\end{proof}

However, finitely generated $\frakR[\calC]$-modules are not necessarily of finite type $\frakR$-modules. Here is an explicit example.

\begin{example} Let $\calC$ be the category $\xymatrix{x \ar@/^/[r]^{\alpha} \ar@/_/[r]_{\beta} & y}$ and we can define a presheaf of $k$-algebras $\frakR$ on $\calC$ as follows
$$
\xymatrix{
    \frakR(x) & \frakR(y). \ar@/^/[l]^{\frakR(\beta)} \ar@/_/[l]_{\frakR(\alpha)}
}
$$
The regular module of the skew category algebra decomposes as 
$$
\frakR[\calC]=1_{\frakR(x)}\frakR[\calC]\oplus 1_{\frakR(y)}\frakR[\calC].
$$
Consider the $\frakR$-modules $\calP_x$ and $\calP_y$, corresponding to the finitely generated $\frakR[\calC]$-modules $1_{\frakR(x)}\frakR[\calC]$ and $1_{\frakR(y)}\frakR[\calC]$ respectively. Then the preceding decomposition can be expressed as
$$
\calP=\calP_{x}\oplus\calP_{y}=[\xymatrix{\frakR(x) & 0 \ar@/^/[l]^{0} \ar@/_/[l]_{0}}]\oplus[\xymatrix{\frakR(x)\oplus\frakR(x) & \frakR(y) \ar@/^/[l]^{\frakR(\beta)} \ar@/_/[l]_{\frakR(\alpha)}}],
$$ 
where $\frakR(\alpha)$ and $\frakR(\beta)$ map $\frakR(y)$ into the first and second summands $\frakR(x)$, respectively. We have $\calC/x$ as a category with one object $(x,1_x)$ and a single morphism which is the identity. Therefore,
$\calP_{x}|_x=\frakR(x)$. We can write out $\calC/y$ explicitly and then
$$
\xymatrix{ && \frakR(x) &\\
\calP_{x}|_y & = && 0 \ar[lu]_{0} \ar[ld]^{0}\\
 & & \frakR(x) & .
}
$$
In the same way we get $\calP_{y}|_x=\frakR(x)\oplus\frakR(x)$ and
$$
\xymatrix{ & & \frakR(x)\oplus\frakR(x) &\\
\calP_{y}|_y& = & & \frakR(y) \ar[lu]_{\frakR(\alpha)} \ar[ld]^{\frakR(\beta)}\\
 & & \frakR(x)\oplus\frakR(x) & .
}
$$
Now let $\frakR=\underline{k}$ be the constant functor and we consider a morphism $\phi : \underline{k}|_y \to \calP_x|_y$. From the following commutative diagram
$$
\xymatrix{
& & k \ar@/_/[ddd]_{\phi_{1}} & \\
\underline{k}|_y \ar[ddd]_{\phi} & & & k \ar[ul]_{1_{k}} \ar[dl]^{1_{K}} \ar@/^/[ddd]^{0}\\
& = & k \ar@/_/[ddd]_{\phi_{2}} &\\
& & k &\\
\calP_x|_y & & & 0 \ar[ul]_{0} \ar[dl]^{0}\\
& & k &
}
$$
we get $\phi_{1}=\phi_{2}=0$ which forces $\phi=0$. It implies that it is impossible to find an epimorphism $\underline{k}^n|_y \to \calP_x|_y$ for any $n$, and thus $\calP_x$ is not of finite type in the geometric sense. Same can be said for $\calP_y$ by similar calculations.
\end{example}

\end{document}